\newtheorem{thm}{Theorem}[section] 
       \newtheorem{pro}[thm]{Proposition} 
       \newtheorem{cor}[thm]{Corollary}    
       \newtheorem{lem}[thm]{Lemma}        
       \theoremstyle{definition} 
       \newtheorem{rem}[thm]{Remark}   
       \newtheorem{defn}[thm]{Definition}  
       \newtheorem{exam}[thm]{Example}
\renewcommand{\Box}{\square}    
\newcommand{\codim}{{\rm{codim\hspace{2pt}}}}
\newcommand{\diffeo}{{\rm{diffeo}}}
\newcommand{\proj}{{\rm{proj}}}
\newcommand{\Sing}{{\rm{Sing\hspace{2pt}}}}
\newcommand{\rank}{{\rm{rank\hspace{2pt}}}}
\newcommand{\re}{{\rm{Re}}}
\newcommand{\im}{{\rm{Im}}}
\newcommand{\grad}{\mathop{\rm{grad}}}
\renewcommand{\d}{{\rm{d}}}
\newcommand{\e}{\varepsilon}
\newcommand{\m}{\setminus}
\newcommand{\fin}{\hspace*{\fill}$\Box$\vspace*{2mm}}
\newcommand{\cN}{{\mathcal N}}
\newcommand{\bR}{{\mathbb R}}
\newcommand{\bC}{{\mathbb C}}
\newcommand{\bN}{{\mathbb N}}
\renewcommand{\mathbb}{\mathsf}
\begin{document}

\title[Fibrations at infinity of real polynomial maps]{Real polynomial maps and singular open books at infinity}

%

\author{\sc R. N. Ara\'ujo dos Santos}
\address{ICMC,
Universidade de S\~ao Paulo,  Av. Trabalhador S\~ao-Carlense, 400 -
CP Box 668, 13560-970 S\~ao Carlos, S\~ao Paulo,  Brazil}
\email{rnonato@icmc.usp.br}

\author{Ying Chen}
\address{ICMC,
Universidade de S\~ao Paulo,  Av. Trabalhador S\~ao-Carlense, 400 -
CP Box 668, 13560-970 S\~ao Carlos, S\~ao Paulo,  Brazil}
\email{yingchen@icmc.usp.br}

\author{Mihai Tib\u ar}
\address{Math\'ematiques, UMR-CNRS 8524, Universit\'e Lille 1,
59655 Villeneuve d'Ascq, France.}
\email{tibar@math.univ-lille1.fr}

\subjclass[2010]{ 32S55, 14D06, 58K05, 57R45, 14P10, 32S20, 58K15, 57Q45, 32C40, 32S60}

\keywords{singularities of real analytic maps, open book decompositions, Milnor fibrations}

\thanks{Y. Chen acknowledges the Brazilian grant FAPESP-Proc. 2012/18957-7.
R. Ara\'ujo dos Santos acknowledges the grants USP-COFECUB Uc Ma 133/12, CNPq 474701/2012-3 and Fapesp 2011/20804-1. M. Tib\u ar acknowledges the support from USP-COFECUB grant Uc Ma 133/12.}


\begin{abstract}
We provide significant conditions under which we prove the existence of stable open book structures at infinity, i.e. on spheres $S^{m-1}_R$ of large enough radius $R$. We obtain new classes of real polynomial maps $\bR^m \to \bR^p$ which induce such structures.
\end{abstract}

\maketitle


\section{Introduction}\label{s:poly}

Open book structures produced on small spheres $S^{2n-1}_\e$ by holomorphic function germs $f : (\bC^n, 0) \to (\bC, 0)$  have their origins in  Milnor's work \cite{Mi}. Milnor actually shows that the map $f/|f| : S^{2n-1}_\e \m K_\e \to S_1^1$ is a locally trivial fibration without any condition upon the singular locus. In case of an isolated singularity at the origin, the link $K_\e := f^{-1}(0) \cap S^{2n-1}_\e$ becomes a fibre of a trivial fibration in a thin tube  $f_| : S^{2n-1}_\e \cap f^{-1}(D_\delta)\to D_\delta$, for small enough $\e \gg \delta >0$, which property is part of the classical definition of an ``open book structure'' cf \S \ref{ss:open}.

  If $f$ has nonisolated singularities, then a dotted tube fibration $f_| : B_\e \cap f^{-1}(D^*_\delta)\to D^*_\delta$ still exists due to the Thom regularity along Whitney strata of $f^{-1}(0)$ as embedded in the small ball $B_\e$,  as first proved by Hironaka \cite{Hi}. Together with Milnor's fibration of  $f/|f|$, this yields a so-called ``open book structure with singular binding'' on small enough spheres, cf \cite{ACT}, see Definition \ref{d:booksinginf}.

 For real analytic map germs $\psi : (\bR^m, 0) \to (\bR^p, 0)$, Milnor  proved in \cite[Theorem 11.2]{Mi} that if $\psi$ has \emph{isolated singularity}, i.e. $\Sing \psi = \{0\}$,  then there exists a locally trivial fibration
 $S^{m-1}_\e \m K_\e \to S_1^{p-1}$ independent of $\e > 0$ up to diffeomorphism type, even if the map $\psi/\|\psi\|$ fails in general to provide a locally trivial fibration unlike the above cited case of holomorphic functions. Together with the tube fibration defined by $\psi$ near the link $K_\e := \psi^{-1}(0) \cap S^{m-1}_\e$, these yield a so-called \emph{open book structure with codimension $p$ binding} on the sphere $S^{m-1}$, cf \cite{dST0}, \cite{dST1}.


We address here the similar existence problem for polynomial maps $\Psi : \bR^m \to \bR^p$, $m\ge p \ge 1$, and on arbitrarily large spheres.  Several papers focussed on estimating the set of bifurcation values $B(\Psi)\subset \bR^p$, more precisely those  produced by the asymptotic nonregular behaviour of fibres at infinity: first in case of holomorphic functions, e.g. \cite{Su}, \cite{Br}, and then in the general setting of real maps (\cite{Ra}, \cite{KOS}, \cite{CT}, \cite{DRT} etc). 
This new phenomenon has its impact on the existence of fibrations (see e.g. \cite{Ti}). In the simple example $f= x + x^2 y$ pointed out by Broughton \cite{Br}, the value $0$ is atypical without being singular.  Moreover, even in the total absence of bifurcation values, the map $\Psi/ \|\Psi\|$ may not induce a fibration on any large enough sphere, see Example \ref{e:nofib}.

In case of a complex polynomial $f: \bC^n \to \bC$, by using Milnor's method in the large scale, N\'emethi and Zaharia \cite{NZ} showed that the map $f/|f|$ is a locally trivial fibration on a large enough sphere $S^{2n-1}_R$ as soon as $f$ has at most the value $0$ as asymptotic $\rho$-nonregular value. 
Attaching to this fact the observation  that $f$ defines a locally trivial fibration near the link as a consequence of the Thom regularity along $V = f^{-1}(0)$, we get  an  \emph{open book decomposition at infinity} (i.e. on spheres of large enough radius $R$) \emph{with singular binding}, see Definition \ref{d:booksinginf}, Theorem \ref{t:proto}. 

Starting from this prototype, we produce here classes of genuine real maps which still induce open book structures at infinity, and in the same time we indicate the limits of such existence results. 
First, we show by  Theorem \ref{t:sing} that under condition (i) below, condition (ii) is necessary and sufficient for the existence of open books at infinity induced by the map $\frac{\Psi}{\|\Psi\|}$. Both conditions are defined in terms of the Milnor set $M(\cdot)$ introduced in Definition \ref{d:M}:\\
\\
 (i). $\overline{M(\Psi)\m V} \cap  V$ is bounded, \\
(ii). $M(\frac{\Psi}{\|\Psi\|})$ is bounded.\\

  Condition (i) expresses the transversality of the fibres of $\Psi$ different from $V := \Psi^{-1}(0)$ to any  sphere $S^{m-1}_R$ of large enough radius $R$ in some small enough neighbourhood $N$ of the link $K_R :=V \cap S^{m-1}_R$ and thus satisfies the requirement of Definition \ref{d:M} concerning $N$, whereas condition (ii) is just the 
transversality of the fibres of $\frac{\Psi}{\|\Psi\|}$ to such spheres, at any point outside the link. These conditions turn out to be independent and are related to those  in case of germs \cite{ACT}. 
We then find classes
of real polynomial maps which satisfy them, as follows.

We show that condition (i) is satisfied whenever: $\Sing(\Psi)\cap V$ is  bounded (Corollary \ref{t:nonsing}), or  $\Psi$ is a complex polynomial function $\bC^n \to \bC$ (Theorem \ref{t:proto}), or $\Psi$ is a polar weighted homogeneous mixed polynomial (Theorem \ref{t:homogen}).

It was known (cf \cite{NZ}, see Theorem \ref{t:proto}) that condition (ii) is satisfied in case of a semi-tame complex polynomial function $f$. We show here that condition (ii) is also satisfied in the following purely real settings:
$\Psi$ is real radial weighted homogeneous and $\Sing(\Psi)\subset V$ (Proposition \ref{p:radial}), or $f$ a semi-tame mixed polynomial as in Theorem \ref{t:semitame} and Corollary \ref{c:bp}, or  $\Psi$ is a polar weighted homogeneous mixed polynomial (Theorem \ref{t:homogen}), or has a certain Newton non-degeneracy property as pointed out in \S \ref{ss:newton}.

\medskip

Let us present here the an example (inspired from  \cite{oka1} and \cite{Ch-thesis}). More examples of real maps will be offered in \S \ref{exemples}.

 \begin{exam}
Let $f:\mathbb{C}^{2}\rightarrow\mathbb{C}$, $f(x,y)=(2x^{2}+|x|^{2})y$. The singular locus of $f$ is $\{(x,y)\in\mathbb{C}^{2}\mid x=0\}$ and  is contained in $V = \{ x=0\} \cup \{ y = 0\}$. By computing the set $M(f)\setminus\Sing(f)$ we find that $\overline{M(f)\setminus V}\cap V$ is bounded. Since $f$ is radial homogeneous, it follows from
 Proposition \ref{p:radial} and Theorem \ref{t:sing} that $f$ verifies the above conditions (i) and (ii) and thus, by Theorem \ref{t:sing}, has an open book structure  at infinity induced by $f/| f|$.
\end{exam}


\section{Fibrations and open book decompositions on spheres}\label{s:open}

\subsection{Higher open books with singular binding}\label{ss:open}
\begin{defn}\label{d:booksinginf}  (\cite{ACT}).
 We say that the pair $(K, \theta)$ is a \textit{higher open book structure with singular binding}  on an analytic manifold $M$ of dimension $m-1 \ge p \ge 2$ if
 $K\subset M$ is a singular real subvariety of codimension $p$ and $\theta : M\setminus K \to S^{p-1}_1$ is a locally trivial smooth fibration such that $K$ admits a neighbourhood $N$ for which the restriction
$\theta_{|N\setminus K}$ is the composition $N\setminus K  \stackrel{h}{\to} B^p \setminus \{ 0\} \stackrel{s/\|s\|}{\to} S^{p-1}_1$ where $h$ is a locally trivial fibration and $B^p$ denotes some open ball at the origin of $\bR^p$. 

 One says that the \textit{singular fibered link} $K$ is the \textit{binding}  and that the (closures of) the fibers of $\theta$ are the \textit{pages} of the \textit{open book}.
 \end{defn}

From the above definition it also follows that $\theta$ is surjective. 
In the classical definition of an open book (see e.g. \cite{Wi}, \cite{Et}) we have $p=2$, $K\subset M$
is a 2-codimensional submanifold which admits a neighbourhood $N$ diffeomorphic to $B^2 \times K$ 
for which $K$ is identified to $\{ 0\} \times K$ and the restriction  
$\theta_{|N \m K}$ is the following composition  $N\m K \stackrel{\diffeo}{\simeq} B^2 \m \{ 0\} \times K \stackrel{\proj}{\to} B^2 \m \{ 0\}  \stackrel{s/\|s\|}{\to} S^1$.  
In \cite{dST1} we have enlarged this definition to $p> 2$ (``higher'' open book structure), namely $K$ is a $p$-codimensional  submanifold.
 
Observe that the difference between the classical and the new definition \ref{d:booksinginf} is that $K$ is here a (singular) subvariety and that the  trivial fibration $N\setminus K  \to B^p \setminus \{ 0\}$ is replaced by a locally trivial fibration $h$.

\subsection{$\rho$-regularity}

Let $\rho$ denote some distance function on $\bR^m$.
The transversality of the fibres of a map $\psi$ to the levels of $\rho$ will be called  \textit{$\rho$-regularity}. This regularity is a basic tool, used by Milnor in the local setting, by Mather, Looijenga, Bekka etc in the stratified setting and for instance in \cite{NZ, Ti-reg}, \cite{Ti}, \cite{DRT} at infinity. This condition is used to produce locally trivial fibrations. This is for instance the essential ingredient in the proof that regular stratifications (Whitney (b)-regular or Bekka (c)-regular for instance) imply local triviality, roughly speaking.  In this paper we shall tacitly use as $\rho$ the Euclidean distance function and its open balls and spheres or radius $R$ denoted by $B_R$ and $S_R$ respectively. Let us give the general definition which fits in the germs setting as well as at infinity.

\begin{defn}\label{d:M}
Let $U\subset \bR^m$ be an open set and let $\rho : U \to \bR_{\ge 0}$ be a proper analytic function. 
We say that the set of \textit{$\rho$-nonregular points} (sometimes called \textit{the Milnor set}) of an analytic map  $\Psi :U \to \bR^p$ is the set of non-transversality between $\rho$ and $\Psi$:
 \[ M(\Psi) := \{ x\in U \mid \rho \not\pitchfork_x \Psi\}.\]

We call:
\[
 S(\Psi):=\{t_0\in\bR^p\mid\exists \{\mathrm{x}_j\}_{j\in \bN}\subset M(\Psi), \lim_{j\to\infty}\|\mathrm{x}_j\|=\infty\mbox{ and }\lim_{j\to\infty}\Psi(\mathrm{x}_j)=t_0\}.
\]
the \emph{set of asymptotic $\rho$-nonregular values}. 
 
 Similarly, the set of $\rho$-nonregular points of $\frac{\Psi}{\|\Psi\|} :U\m V \to S^{p-1}_1$ is the set:
 \[ M(\Psi/\|\Psi\|) :=  \{ x\in U \m V \mid \rho \not\pitchfork_x \Psi/ \|\Psi\| \}.\]
 \end{defn}
 
\begin{rem}
It follows from the above definition that $M(\Psi)$ is an analytic relatively closed subset of $U$ and it contains the singular set $\Sing \Psi$. As for $M(\frac{\Psi}{\|\Psi\|})$, it is semi-analytic but does no necessarily include $\Sing \Psi$. However, we clearly have $M(\frac{\Psi}{\|\Psi\|}) \subset M(\Psi)\m V$.
\end{rem}


\section{Open books with singular binding at infinity. General results}\label{ss:global}

\subsection{Milnor fibration induced by $\frac{\psi}{\|\psi \|}$}

We focus here on the existence of a locally trivial fibration:
 
 \begin{equation}\label{eq:milnormap}
 \frac{\Psi}{\|\Psi \|} : S^{m-1}_R \m K_R \to S_1^{p-1}.
\end{equation}

Even in the total absence of bifurcation values, the map $\Psi/ \|\Psi\|$ may not induce a fibration \eqref{eq:milnormap} on any large enough sphere, as we can see in the following example inspired from \cite{TZ}:

\begin{exam}\label{e:nofib}
Let $\Psi:\mathbb{R}^{3}\rightarrow\mathbb{R}^{2}$, $\Psi(x,y,z)=(y(2x^{2}y^{2}-9xy+12),z)$.
The bifurcation set $B(\Psi)$ is empty, but the map $\Psi/ \|\Psi\|$ has singularities on any sphere $S^{2}_R$ with $R\gg 1$. Indeed, from the definition we get that  $M(\Psi)=\{(x,y,z)\in\mathbb{R}^{3}\mid 4xy^{4}-9y^{3}-6x^{3}y^{2}+18x^{2}y-12x=0\}$. By considering sequances of points such that $x\to \infty$ and $xy\to 1^+$ we obtain that 
 $M(\Psi)$ has unbounded branches which are asymptotically tangent to fibres $\Psi^{-1}(0,z)$, for any $z\in R$.
Moreover, one can easily check that those asymptotical branches of $M(\Psi)$ are in $M(\frac{\Psi}{\left|\Psi \right|})$ too.  
\end{exam}

We prove the following criterion for the existence of an open book structure induced by $\Psi/ \|\Psi\|$.
For our zero locus $V = \Psi^{-1}(0)\subset \bR^m$, we say that ``$V$ has codimension $p$ at infinity'' if for any radius $R\gg 1$, $\codim_\bR V \setminus B_R = p$, in the sense that every irreducible component of $V$ has this property.

\begin{thm}\label{t:sing}
Let $\Psi : \bR^m \to \bR^p$ be a real polynomial map such that $\codim_\bR V = p$ at infinity and that $\overline{M(\Psi)\setminus V}\cap V$  is bounded. Then we have the equivalences:
 \begin{enumerate} 
\item $(K_R, \frac{\Psi}{\|\Psi\|})$ is an open book decomposition of $S_R^{m-1}$ with singular binding, independent of the high enough radius $R\gg 1$ up to $C^\infty$ isotopy.
\item $M(\frac{\Psi}{\|\Psi\|})$ is bounded.
\end{enumerate}
\end{thm}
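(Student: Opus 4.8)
The argument rests on a single translation coming from Definition \ref{d:M}: for $R>0$ and $x\in S^{m-1}_R\setminus K_R$, the restriction $\theta_R:=(\frac{\Psi}{\|\Psi\|})|_{S^{m-1}_R}:S^{m-1}_R\setminus K_R\to S^{p-1}_1$ is a submersion at $x$ if and only if $x\notin M(\frac{\Psi}{\|\Psi\|})$, since the transversality of the fibres of $\frac{\Psi}{\|\Psi\|}$ to the level $\rho^{-1}(R)$ at $x$ is exactly the submersivity of $\theta_R$ at $x$. The implication $(a)\Rightarrow(b)$ is then immediate: a smooth locally trivial fibration admits, by definition, smooth local trivialisations, i.e. diffeomorphisms onto products that intertwine $\theta_R$ with a projection, so $\theta_R$ is necessarily a submersion on $S^{m-1}_R\setminus K_R$. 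By the translation this forces $S^{m-1}_R\cap M(\frac{\Psi}{\|\Psi\|})=\emptyset$ for all $R\gg1$, whence $M(\frac{\Psi}{\|\Psi\|})$ is bounded. Note this direction needs neither (i) nor the codimension hypothesis.

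For the substantial direction $(b)\Rightarrow(a)$ I would first use hypothesis (i) to organise the geometry near the binding. Boundedness of $\overline{M(\Psi)\setminus V}\cap V$ gives, for all large $R$, a neighbourhood $N$ of $K_R$ in $S^{m-1}_R$ with $M(\Psi)\cap N\subset V$; in particular $\Sing\Psi\cap N\subset V$, so the fibres $\Psi^{-1}(c)$ with $c\neq0$ small are smooth near $N$ and meet $S^{m-1}_R$ transversally there. This yields the \emph{tube fibration} $h:=\Psi|_N:N\setminus K_R\to B^p\setminus\{0\}$ as a locally trivial fibration, together with the factorisation $\theta_R=(s/\|s\|)\circ h$ near $K_R$ demanded by Definition \ref{d:booksinginf}. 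A Curve Selection argument at infinity shows moreover that the regular part of $V$ is transverse to all large spheres: a tangency of $V$ to $S^{m-1}_{\|x_j\|}$ along a sequence $x_j\to\infty$ would produce points of $M(\Psi)\setminus V$ accumulating onto the unbounded part of $V$, contradicting (i). Combined with the codimension hypothesis, this makes $K_R$ a genuine (possibly singular) codimension $p$ subvariety of $S^{m-1}_R$, i.e. a legitimate binding.

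It then remains to construct the global fibration and to glue it to $h$. Hypothesis (b) and the translation give that $\theta_R$ is a submersion on the whole of $S^{m-1}_R\setminus K_R$ once $R>R_0$. The plan is to prove local triviality by lifting vector fields from $S^{p-1}_1$: over the interior the submersion property supplies lifts, while inside $N$ I would lift through $h$ by vector fields preserving the radial coordinate $\|\Psi\|$ (equivalently, tangent to the spheres of $B^p$ about $0$), so that the integral curves stay at fixed distance from $K_R$ and the flow is complete. A partition of unity merges these into a global complete lift, producing the locally trivial fibration $\theta_R$ and the compatibility required by Definition \ref{d:booksinginf}. Independence of $R$ up to $C^\infty$ isotopy then follows because the same transversality holds uniformly on every annulus $\overline{B_{R'}}\setminus B_R$ with $R_0<R<R'$: integrating a vector field that is transverse to the spheres, tangent to the fibres of $\frac{\Psi}{\|\Psi\|}$ off $V$, and compatible with the tube structure along $V$, carries the open book on $S^{m-1}_R$ diffeomorphically onto that on $S^{m-1}_{R'}$.

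The main obstacle is precisely this gluing step. The fibration $\theta_R$ lives on the \emph{non-compact} manifold $S^{m-1}_R\setminus K_R$, so Ehresmann's theorem does not apply directly, and the real work is to control the lifted vector field near the singular binding $K_R$ so that its flow remains complete and matches $h$. This is exactly where the two hypotheses must cooperate, (b) governing the interior transversality and (i) governing the behaviour along $V$, and where the construction has to be made uniform in $R$ in order to obtain the stated isotopy invariance.
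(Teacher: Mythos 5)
Your proposal follows essentially the same route as the paper's own proof. The easy direction is identical (locally trivial $\Rightarrow$ submersion $\Rightarrow$ $M(\frac{\Psi}{\|\Psi\|})\cap S^{m-1}_R=\emptyset$ for $R\ge R_0$), and your substantial direction uses hypothesis (i) exactly as the paper does: it yields a neighbourhood of the link on which $\Psi$ restricted to a compact tubular neighbourhood $N$ of $K_R$ is a proper submersion $N\setminus K_R\to B^p_\delta\setminus\{0\}$, hence a locally trivial fibration by Ehresmann, whose composition with the radial projection coincides with $\frac{\Psi}{\|\Psi\|}$ and supplies the factorisation required by Definition \ref{d:booksinginf}. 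Where you diverge is only in explicitness: the paper compresses the remaining work (global local triviality of $\frac{\Psi}{\|\Psi\|}$ on the non-compact manifold $S^{m-1}_R\setminus K_R$, and the independence of $R$ up to isotopy) into one sentence, whereas you spell out the standard completion by lifting vector fields, with lifts tangent to the levels of $\|\Psi\|$ near the binding so that the flows are complete, patched by a partition of unity; that sketch is correct and is indeed the argument the paper leaves implicit.

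There is, however, one concretely flawed step in your write-up: the claim that a tangency of (the regular part of) $V$ to spheres $S^{m-1}_{\|x_j\|}$ along a sequence $x_j\to\infty$ would ``produce points of $M(\Psi)\setminus V$ accumulating onto the unbounded part of $V$, contradicting (i).'' This does not follow. A tangency point of $V$ to a sphere lies in $M(\Psi)\cap V$, and hypothesis (i) bounds $\overline{M(\Psi)\setminus V}\cap V$, not $M(\Psi)\cap V$; the latter may perfectly well be unbounded (in the paper's own examples $\Sing\Psi\subset V$ is unbounded, and $\Sing\Psi\subset M(\Psi)$). Nor does a tangency at a point where $\Psi$ is a submersion force nearby fibres $\Psi^{-1}(c)$, $c\neq 0$, to have critical points of $\rho$: in local foliation coordinates one can realise models (e.g.\ $\partial_t h = t^2+c^2$) in which the Milnor set near such a tangency reduces to the single point, so no points of $M(\Psi)\setminus V$ are produced at all. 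The fact you actually need --- that $V$ is (stratified) transverse to all spheres of sufficiently large radius, so that $K_R$ is a genuine codimension $p$ subvariety and the bindings are isotopic for large $R$ --- is true, but for a different and standard reason: $\rho$ restricted to each stratum of the algebraic set $V$ is a proper semialgebraic function, hence has only finitely many critical values (semialgebraic Sard, or curve selection applied to $\rho|_V$ alone), leaving only finitely many bad radii. With that substitution your argument is sound and coincides with the paper's.
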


\begin{proof}
\noindent
(a) $\Rightarrow$ (b). By assumption, $\frac{\Psi}{\|\Psi\|}$ is a locally trivial $C^\infty$-fibration and in particular induces a submersion $S_{R}^{m-1}\setminus K_{R}\to S^{p-1}$ for all large enough spheres $S_R^{m-1}$. This implies that $M(\frac{\Psi}{\|\Psi\|})$ is bounded.

\noindent
(b) $\Rightarrow$ (a).
If $\displaystyle{\overline{M(\Psi)\setminus V}\cap V}$  is bounded then there exist some $R_{0}\gg 1$ and an open neighborhood $\cN$ of $V\cap (\bR^m\setminus B^m_{R_0})$ depending on $R_0$ such that $M(\Psi) \cap (\cN \m V)$ is empty.

 Let us fix some $R\ge R_0$ and denote $V_R := V\m B_R$. 
The map $\displaystyle{\frac{\Psi}{\|\Psi\|}:S_{R}^{m-1}\setminus K_R\to S^{p-1}}$ is a submersion by hypothesis but one cannot apply Ehresmann's theorem since it is not proper if $K_R$ is not empty.
Nevertheless we may use the fibration structure of the map $\Psi$ near $V_R$ in order to control $\frac{\Psi}{\| \Psi\|}$ at the points of the link $K_R$. Indeed, we consider a tubular neighbourhood $N$ of $K_R$ included in $\cN \cap S_R$ and observe that the map  $\Psi : N\m K_R \to B_\delta^{p}\m \{ 0\}$ is a submersion. By taking a tubular neighbourhood $N$ which is also compact,  this map is proper too, hence a locally trivial fibration by Ehresmann's theorem.

Then, composing the map with the radial projection $B_\delta^{p}\m \{ 0\} \to S_1^{p-1}$ we get a proper submersion which moreover coincides with the map $\frac{\Psi}{\|\Psi\|}$.  This fits in the Definition \ref{d:booksinginf} and provides the desired open book decomposition with singular binding.
\end{proof}

\begin{rem}
The hypothesis of Theorem \ref{t:sing} includes the possibility of non-bounded branches of the set $\Sing(\Psi)\m V$. An open problem is to find such an example satisfying Theorem \ref{t:sing}.
\end{rem}

\subsection{Open books with smooth binding}
The hypothesis ``$\Sing(\Psi)\cap V$ is  bounded'' is the most general one under which $(K_R, \frac{\Psi}{\|\Psi\|})$ may yield an open book structure on $S_R^{m-1}$ with non-singular binding. We then have:
\begin{cor}\label{t:nonsing}
Let $\Psi : \bR^m \to \bR^p$ be a real polynomial map  such that $\codim_\bR V = p$ at infinity and let $\Sing(\psi)\cap V$ be bounded. Then
  $(K_R, \frac{\Psi}{\|\Psi\|})$ is an open book decomposition of $S_R$,  independent  of the radius $R\gg 1$ up to isotopy,  if and only if $M(\frac{\Psi}{\|\Psi\|})$ is bounded. 
\end{cor}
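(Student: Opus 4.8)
The plan is to obtain the statement as a special case of Theorem \ref{t:sing}: the stronger hypothesis ``$\Sing(\Psi)\cap V$ bounded'' will be shown to imply the hypothesis ``$\overline{M(\Psi)\setminus V}\cap V$ bounded'' of that theorem, after which the equivalence follows verbatim; separately, the stronger hypothesis will be seen to force the binding to be non-singular. The forward implication (open book $\Rightarrow M(\Psi/\|\Psi\|)$ bounded) is immediate as in Theorem \ref{t:sing}(a)$\Rightarrow$(b), so the real work lies in checking that the theorem is applicable.

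First I would reduce the boundedness of $\overline{M(\Psi)\setminus V}\cap V$ to the boundedness of $M(\Psi)\cap V$. Since $M(\Psi)$ is a relatively closed analytic set (Remark after Definition \ref{d:M}), we have $\overline{M(\Psi)\setminus V}\cap V\subseteq\overline{M(\Psi)}\cap V=M(\Psi)\cap V$, so it suffices to bound the latter.

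The main step is then to show that $M(\Psi)\cap V$ is bounded. I would split $M(\Psi)\cap V=(\Sing(\Psi)\cap V)\cup C$ with $C:=M(\Psi)\cap(V\setminus\Sing(\Psi))$, the first piece being bounded by hypothesis. On the smooth manifold $W:=V\setminus\Sing(\Psi)$, where $\Psi$ is a submersion, the non-transversality $\rho\not\pitchfork_x\Psi$ is exactly the condition $\nabla\rho(x)\in\span(\nabla\Psi_1(x),\dots,\nabla\Psi_p(x))=(T_xW)^\perp$, i.e. $x\in\Crit(\rho|_W)$; hence $C=\Crit(\rho|_W)$. Now $\rho=\|\cdot\|^2$ is polynomial and $W$ is semi-algebraic, so $C$ is semi-algebraic and, by Tarski--Seidenberg, so is its image $\rho(C)\subseteq\bR$; by Sard's theorem $\rho(C)$ has measure zero, and a measure-zero semi-algebraic subset of $\bR$ is finite. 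Thus there are finitely many $c_1,\dots,c_k\ge 0$ with $C\subseteq\bigcup_{i=1}^k\{x\in V:\|x\|=\sqrt{c_i}\}$, a finite union of compact spheres cut with $V$, hence bounded. This is the crux of the argument; everything else is formal. It follows that $M(\Psi)\cap V$, and therefore $\overline{M(\Psi)\setminus V}\cap V$, is bounded.

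Finally I would conclude. The hypotheses of Theorem \ref{t:sing} are now satisfied, giving the equivalence between $M(\Psi/\|\Psi\|)$ bounded and $(K_R,\Psi/\|\Psi\|)$ being an open book decomposition, independent of $R\gg 1$ up to isotopy. It remains to upgrade ``singular binding'' to genuine (smooth) binding: since both $\Sing(\Psi)\cap V$ and $M(\Psi)\cap V$ are bounded, for every $R\gg 1$ the link $K_R=V\cap S_R$ avoids $\Sing(\Psi)$, so $V$ is smooth near $K_R$, and avoids $M(\Psi)$, so $S_R\pitchfork V$ along $K_R$; hence $K_R$ is a smooth codimension-$p$ submanifold of $S_R$ and the binding is non-singular. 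The only delicate point is the finiteness of the critical values of $\rho|_W$; once that is in hand the corollary is inherited directly from Theorem \ref{t:sing}.
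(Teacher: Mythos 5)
Your proof is correct, and its overall skeleton matches the paper's: reduce the corollary to Theorem \ref{t:sing} by showing that boundedness of $\Sing(\Psi)\cap V$ forces boundedness of $\overline{M(\Psi)\setminus V}\cap V$. Where you genuinely diverge is in how that implication is established. The paper isolates it as Lemma \ref{l:bounded} and attributes it, without written details, to the curve selection lemma (at infinity): an unbounded sequence in $\overline{M(\Psi)\setminus V}\cap V$ would be traced by an analytic arc along which one derives a contradiction. You instead prove the stronger statement that $M(\Psi)\cap V$ itself is bounded, by splitting off $\Sing(\Psi)\cap V$ and identifying the remainder $C=M(\Psi)\cap(V\setminus\Sing(\Psi))$ with $\Crit(\rho|_W)$ on the smooth locus $W=V\setminus\Sing(\Psi)$ --- a correct identification, since for $x\in W$ non-transversality of $\rho$ and $\Psi$ means exactly $\nabla\rho(x)\in(T_xW)^{\perp}$ --- and then combining Sard's theorem with Tarski--Seidenberg: $\rho(C)$ is a measure-zero semi-algebraic subset of $\bR$, hence finite, so $C$ sits inside finitely many spheres. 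This Sard-plus-semi-algebraicity route buys an explicit, self-contained argument and a sharper conclusion (finiteness of the critical values of $\rho$ on the smooth part of $V$, not merely boundedness of the closure set), whereas the paper's curve-selection route is shorter to state and is the technique used throughout the rest of the paper. You also make explicit the upgrade from ``singular binding'' to smooth binding --- $K_R$ avoids both $\Sing(\Psi)$ and $M(\Psi)$ for $R\gg1$, so it is a smooth codimension-$p$ submanifold of $S_R$ transversally cut out --- a point the paper leaves implicit in the surrounding discussion; strictly speaking one should add that the tube $N$ then fibers trivially over the ball (Ehresmann over a contractible base), recovering the classical product neighbourhood, but this is immediate from the proof of Theorem \ref{t:sing}.
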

The proof follows from Theorem \ref{t:sing}, with the value $R_0$ as in its proof, via the following result which is an application of the curve selection lemma.
\begin{lem}\label{l:bounded}
If $\Sing(\Psi)\cap V$ is bounded then $\displaystyle{\overline{M(\Psi)\setminus V}\cap V}$ is bounded. \fin
\end{lem}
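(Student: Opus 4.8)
The plan is to split the target set according to the singular locus and to bound the two pieces separately:
\[
\overline{M(\Psi)\setminus V}\cap V \;=\; \Bigl(\overline{M(\Psi)\setminus V}\cap \Sing(\Psi)\cap V\Bigr)\;\cup\;\Bigl(\overline{M(\Psi)\setminus V}\cap (V\setminus \Sing(\Psi))\Bigr).
\]
The first piece is contained in $\Sing(\Psi)\cap V$ and is therefore bounded by hypothesis; this is where, and the only place where, the assumption is used. All the geometric content lies in showing that the second piece is bounded, which I claim holds \emph{unconditionally}.

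For the second piece I would first reduce to a cleaner set. Since $M(\Psi)$ is a relatively closed analytic subset of $\bR^m$, we have $\overline{M(\Psi)\setminus V}\subseteq M(\Psi)$, so it suffices to bound the semialgebraic set $A:=M(\Psi)\cap (V\setminus\Sing(\Psi))$. At a point $x\in V\setminus\Sing(\Psi)$ the map $\Psi$ is a submersion, the local fibre $\Psi^{-1}(0)=V$ is a smooth submanifold with $T_xV=\ker d\Psi_x$, and (as $\nabla\rho(x)\neq 0$ for $x\neq 0$) the condition $\rho\not\pitchfork_x\Psi$ is equivalent to $\nabla\rho(x)\in (T_xV)^\perp$, i.e. $\nabla\rho(x)\perp T_xV$. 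Because $\nabla\rho(x)$ is a positive multiple of $x$, the set $A$ consists precisely of the points of $V\setminus\Sing(\Psi)$ whose position vector is normal to $V$; equivalently, the critical points of $\rho_{|V\setminus\Sing(\Psi)}$.

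Now suppose, for contradiction, that $A$ is unbounded. Being semialgebraic, by the curve selection lemma at infinity it contains a real analytic arc $\gamma:(0,\varepsilon)\to A$ with $\|\gamma(s)\|\to\infty$ as $s\to 0$. Since $\gamma(s)\in V$ for all $s$ we have $\gamma'(s)\in T_{\gamma(s)}V$, while $\gamma(s)\in A$ forces $\gamma(s)\perp T_{\gamma(s)}V$, whence
\[
\frac{d}{ds}\,\rho(\gamma(s))=\langle \nabla\rho(\gamma(s)),\gamma'(s)\rangle = 2\langle \gamma(s),\gamma'(s)\rangle =0 .
\]
Thus $\rho(\gamma(s))=\|\gamma(s)\|^2$ is constant along the arc, contradicting $\|\gamma(s)\|\to\infty$. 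Hence $A$, and with it the second piece, is bounded, which together with the first piece proves the lemma.

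The key idea is the elementary identity ``radial direction normal to $V$ $\Rightarrow$ constant norm''; once it is available the argument is immediate. I expect the only real care to be in the setup rather than in the geometry: one must verify that $A$ is genuinely semialgebraic (it is, since $M(\Psi)$, $V$ and $\Sing(\Psi)$ are all real algebraic) so that the curve selection lemma at infinity applies, and that the selected arc stays inside the smooth stratum $V\setminus\Sing(\Psi)$ for every small $s>0$, so that $T_{\gamma(s)}V$ and the relation $\gamma'(s)\in T_{\gamma(s)}V$ are legitimate. The escape to infinity takes care of the latter automatically, since the singular part $\Sing(\Psi)\cap V$ is bounded and hence cannot be approached by $\gamma$ as $s\to 0$.
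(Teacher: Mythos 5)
Your proof is correct, and it matches the paper's (omitted) argument: the paper states the lemma with no written proof, remarking only that it ``is an application of the curve selection lemma,'' and your argument is precisely that application --- curve selection at infinity on the semialgebraic set $M(\Psi)\cap\bigl(V\setminus\Sing(\Psi)\bigr)$, followed by the classical computation that an analytic arc whose position vector is everywhere normal to the smooth fibre has constant norm, hence cannot escape to infinity. A nice by-product of your decomposition is that it shows the hypothesis is needed only to control the piece inside $\Sing(\Psi)\cap V$, the regular-stratum piece being bounded unconditionally (equivalently, by semialgebraic Sard, the critical values of $\rho$ on $V\setminus\Sing(\Psi)$ are finite in number).
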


\subsection{Complex polynomials}\label{ss:complex} \ 
It has been proved by N\' emethi and Zaharia \cite{NZ} that for  complex polynomials $f$ with $S(f) \subset \{ 0\}$, called ``semi-tame'', the map $f/\| f\| : S_R^{2n-1} \m K_R \to S^1$ is a locally trivial fibration. 
We complete here this result by pointing out that semi-tame complex polynomials induce open book decompositions at infinity.  

\begin{thm}\label{t:proto}
 Let $f:\mathbb{C}^{n}\rightarrow\mathbb{C}$, $n\ge 2$, be a non-constant complex polynomial.
If $S(f)\subset \{ 0\}$, then $(K_R, \frac{f}{\|f \|})$ is an open book decomposition of $S_R^{2n-1}$ with singular binding, independent of the large enough radius $R\gg 1$ up to $C^\infty$ isotopy.
\end{thm}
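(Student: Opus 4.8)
The plan is to deduce Theorem \ref{t:proto} from the general criterion Theorem \ref{t:sing}, applied to $\Psi = f$ viewed as a real polynomial map $\bR^{2n}\to\bR^2$, so that $m = 2n$ and $p = 2$ (and the dimension requirement $m-1\ge p\ge 2$ of Definition \ref{d:booksinginf} holds since $n\ge 2$). There are three things to check: that $\codim_\bR V = 2$ at infinity, that condition (i) ``$\overline{M(f)\m V}\cap V$ is bounded'' holds, and that condition (ii) ``$M(f/\|f\|)$ is bounded'' holds. Once these are verified, the implication (b)$\Rightarrow$(a) of Theorem \ref{t:sing} immediately yields the open book decomposition at infinity with singular binding, independent of the large radius $R\gg 1$ up to $C^\infty$ isotopy.

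The codimension statement is immediate: since $f$ is non-constant and holomorphic, $V = f^{-1}(0)$ is a complex hypersurface, so every irreducible component has complex codimension $1$, i.e. real codimension $2 = p$, and this persists outside every large ball. Condition (ii) is exactly the content of the N\'emethi--Zaharia theorem \cite{NZ}: for a semi-tame $f$, i.e. $S(f)\subset\{0\}$, the map $f/\|f\|$ is a submersion on every sphere of large enough radius away from $K_R$, which is precisely the assertion that $M(f/\|f\|)$ is bounded. I would simply invoke this, so no new work is needed for (ii).

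The real work, and the step I expect to be the main obstacle, is condition (i). I would establish it through the curve selection lemma at infinity together with the holomorphic description of the Milnor set. Writing $\grad f = (\partial f/\partial z_1,\dots,\partial f/\partial z_n)$, a point $z$ lies in $M(f)$ exactly when $\grad f(z)$ and $\bar z$ are $\bC$-linearly dependent, i.e. $\grad f(z) = c\,\bar z$ for some $c\in\bC$; contracting with the Euler field gives $c\|z\|^2 = \sum_i z_i\,\partial f/\partial z_i =: E(z)$, whence the Milnor--\L ojasiewicz relation $\|\grad f(z)\|\cdot\|z\| = |E(z)|$ on $M(f)$. Suppose $\overline{M(f)\m V}\cap V$ were unbounded. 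Applying curve selection at infinity to this semialgebraic set and passing to a nearby arc in $M(f)\m V$, I would obtain a real analytic arc $z(s)$ with $z(s)\in M(f)\m V$, $\|z(s)\|\to\infty$ and $f(z(s))\to 0$. I would then split according to whether the arc stays away from $\Sing f$ or tends to it: on the part bounded away from $\Sing f$ this is ruled out by the classical fact that a complex algebraic hypersurface is transverse to all sufficiently large spheres outside its singular locus (cf. \cite{Mi}), so the tangency locus of $V$ with spheres is bounded; on the part tending to $\Sing f$ at infinity one uses the relation above together with the hypothesis $S(f)\subset\{0\}$ to exclude branches of $M(f)\m V$ accumulating on $V$ at infinity. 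Deriving this contradiction --- in effect proving that off $V$ the Milnor set cannot approach $V$ at infinity --- is where the holomorphy and the semi-tameness are genuinely used, and is the delicate point.

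With (i) and (ii) in hand, the hypotheses of Theorem \ref{t:sing} are met, and its conclusion provides the asserted open book decomposition of $S_R^{2n-1}$ with singular binding $K_R$, independent of $R\gg 1$ up to $C^\infty$ isotopy.
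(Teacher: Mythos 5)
Your overall strategy --- reduce everything to Theorem \ref{t:sing} by checking the codimension hypothesis together with conditions (i) and (ii) --- is exactly the paper's, and your treatment of the codimension statement and of condition (ii) (invoking N\'emethi--Zaharia \cite{NZ}) matches what the paper does. The genuine gap is condition (i), which you yourself flag as ``the main obstacle'' and then never prove: you set up curve selection at infinity, split into two cases, and for the second case simply assert that holomorphy and semi-tameness ``are genuinely used'' to produce a contradiction, without deriving it. Since this is precisely the step requiring an idea, the proof is incomplete as written.

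Moreover, the partial argument you do give is insufficient. In your first case you invoke the classical fact that a complex hypersurface is transverse to all large spheres away from its singular locus; but that fact concerns $V$ itself, whereas $M(f)\m V$ consists of points where the \emph{other} fibres $f^{-1}(c)$, $c\neq 0$, fail to be transverse to the sphere through the point. Transversality of $V$ to large spheres does not prevent such tangency points from accumulating on smooth points of $V$ at infinity: the continuity-of-tangent-planes argument that handles this in a compact region breaks down along sequences tending to infinity, and the needed propagation of transversality from $V$ to nearby fibres is exactly what Thom $a_f$-regularity provides. That is how the paper settles (i): it cites the Thom regularity of the stratified zero locus of \emph{any} holomorphic function, due to Hironaka \cite{Hi} (see also \cite{HL}, \cite{Ti-compo}, and the first part of the proof of \cite[Theorem 3.1.2]{Ti}). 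In particular, in the holomorphic case condition (i) is unconditional and does not use $S(f)\subset\{0\}$ at all --- semi-tameness enters only through condition (ii). So besides being unfinished, your plan to extract (i) from semi-tameness aims at a harder route than necessary, with no indication of how the asserted contradiction would actually follow.
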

\begin{proof}
 We just show that the conditions of our Theorem \ref{t:sing}  are satisfied. The hypersurface $V= f^{-1}(0)$ has indeed real codimension 2.  The proof of the condition "$\displaystyle{\overline{M(\Psi)\setminus V}\cap V}$ is bounded" can be traced back e.g. to \cite[Theorem 3.1.2]{Ti}, first part of its proof, and it is due to the Thom regularity condition along the Whitney strata of the zero locus of any analytic function, as shown by Hironaka \cite{Hi}, and later, with completely different types of proofs, by e.g. \L ojasiewicz,  Hamm and L\^e \cite{HL} or  Tib\u ar \cite[Theorem 2.9]{Ti-compo}. 

Condition (b) of Theorem \ref{t:sing} has been shown implicitly in the proof of \cite[Theorem 1]{NZ}.  
\end{proof}

\begin{rem}
In \cite[Theorem 10]{NZ} one also compares the fibre of the open book decomposition at infinity as in Theorem \ref{t:proto} to the general affine fibre of $f$. They are not diffeomorphic in general, unlike in the local setting treated by Milnor \cite{Mi}.

The condition $S(f) \subset \{ 0\}$ of the above theorem can be controlled for instance by the non-degeneracy of $f$ on all faces of its Newton polyhedron at infinity, including the so-called ``bad faces'', see the details in  \cite{NZ}.  This control may be extended to mixed polynomials, see  \cite{CT}, \cite{Ch-thesis} and \S \ref{ss:newton} at the end.

\end{rem}


\section{Classes of real maps with fibration properties}\label{exemples}

We find several significant classes of real analytic maps which have either property (i) or property (ii) from the Introduction. By combining these classes we produce examples of open book structures with singular binding.
We start with classes having property (ii): $M(\frac{\Psi}{\|\Psi\|})$ is bounded.

\subsection{Radial weighted homogeneous maps}\label{ss:radial} \ 

Let us consider the $\bR_+$-action on $\bR^m$:
$\rho\cdot x  =(\rho^{q_{1}}x_{1},\ldots,\rho^{q_{m}}x_{m})$ for $\rho \in \bR_+$ and $q_1, \ldots, q_m \in \bN^*$ relatively prime positive integers. If $\Psi (\rho \cdot x) = \rho^d\Psi(x)$ for all $x$ in the domain then we say that $\Psi$ is \textit{radial  weighted-homogeneous}.
\begin{pro}\label{p:radial}
  Let $\Psi$ be radial weighted-homogeneous and let   $\Sing \Psi \subset V$. Then $M(\frac{\Psi}{\|\Psi\|})$ is bounded.
\end{pro}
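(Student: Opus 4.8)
The plan is to prove the stronger statement that $M(\frac{\Psi}{\|\Psi\|})$ is in fact \emph{empty}, from which boundedness is immediate. Write $g := \frac{\Psi}{\|\Psi\|} : U\setminus V \to S^{p-1}_1$; let $t\cdot x = (t^{q_{1}}x_{1},\ldots,t^{q_{m}}x_{m})$ denote the weighted action (I write the parameter as $t$ to free the symbol $\rho$ for the Euclidean distance used in Definition \ref{d:M}), and let $E(x) := (q_{1}x_{1},\ldots,q_{m}x_{m})$ be its infinitesimal generator. The first and decisive observation is that $g$ is invariant under the action: since $\Psi(t\cdot x)=t^{d}\Psi(x)$ and $t>0$, the factor $t^{d}$ cancels in the quotient, so $g(t\cdot x)=g(x)$. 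Differentiating this identity with respect to $t$ at $t=1$ gives $\mathrm{d}g_{x}(E(x))=0$ for every $x\in U\setminus V$; that is, the weighted Euler field $E$ is everywhere tangent to the fibres of $g$.

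Next I would check that $g$ is a submersion at every point of $U\setminus V$. Factor $g=\pi\circ\Psi$, where $\pi:\bR^{p}\setminus\{0\}\to S^{p-1}_{1}$ is the radial projection, whose differential at $y$ has kernel exactly $\bR y$ and restricts to an isomorphism on $y^{\perp}$. For $x\in U\setminus V$ the hypothesis $\Sing\Psi\subset V$ forces $x\notin\Sing\Psi$, so $\mathrm{d}\Psi_{x}$ is surjective; since $\Psi(x)\neq 0$, the composition $\mathrm{d}g_{x}=\mathrm{d}\pi_{\Psi(x)}\circ\mathrm{d}\Psi_{x}$ then has rank $p-1$, i.e. $g$ is a submersion onto $S^{p-1}_{1}$ at $x$.

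Finally I would combine the two facts. For $x\in U\setminus V$ the fibre of $g$ through $x$ has tangent space $\ker\mathrm{d}g_{x}$, whose Euclidean orthogonal complement is spanned by the gradients of the components of $g$ and has dimension $p-1$. Non-$\rho$-regularity at $x$, with $\rho$ the Euclidean distance so that $\nabla\rho(x)$ is radial, means precisely that the radial direction $x$ lies in $(\ker\mathrm{d}g_{x})^{\perp}$. But $E(x)\in\ker\mathrm{d}g_{x}$ by the first step, so this forces $\langle x,E(x)\rangle=\sum_{i}q_{i}x_{i}^{2}=0$; as every $q_{i}\ge 1$, this gives $x=0\notin U\setminus V$, a contradiction. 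Hence $M(\frac{\Psi}{\|\Psi\|})=\emptyset$.

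The routine points are the rank computation for $\pi\circ\Psi$ and the standard identification of non-transversality to a sphere with the radial vector lying in the normal space of the fibre. The one step deserving care is the submersivity claim of the second paragraph: it is essential that $\Sing\Psi\subset V$ guarantees $\mathrm{d}\Psi_{x}$ is onto throughout $U\setminus V$, since otherwise $g$ could fail to be a submersion and the clean orthogonality argument of the last paragraph would break down. I therefore expect this submersivity step to be the main obstacle in a fully rigorous write-up.
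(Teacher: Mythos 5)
Your proof is correct and takes essentially the same route as the paper's: both arguments rest on the weighted Euler vector field being tangent to the fibres of $\frac{\Psi}{\|\Psi\|}$ (you obtain this by differentiating the invariance $g(t\cdot x)=g(x)$, the paper by computing $\langle \gamma(x), \omega_{i,j}(x)\rangle = 0$ from the Euler relation), combined with $\langle x, E(x)\rangle = \sum_i q_i x_i^2 > 0$, to conclude that $M(\frac{\Psi}{\|\Psi\|})$ is in fact empty. Your explicit submersivity step, where $\Sing \Psi \subset V$ is used, is the same point the paper invokes implicitly through its rank condition on the matrix $\Omega_\Psi(x)$, so it is a welcome clarification rather than a departure.
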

\begin{proof} 
The proof goes along the lines of \cite[Proposition 3.2]{ACT}.
  Let $\gamma (x) := \sum_{j=1}^m q_j x_j \frac{\partial}{\partial x_j}$ be the corresponding Euler vector field on $\bR^m$; we have $\gamma (x) = 0$ if and only if $x=0$.  
Suppose that there exists $x\notin V$ such that $x\in M(\frac{\Psi}{\|\Psi\|})$. Then we have $\rank\Omega_\psi(x)\leq p$
where $\Omega_\psi(x)$ denotes the $[(p-1)p/2 +1] \times m$ matrix 
 having on each row a vector $\omega_{i,j}(x) := \Psi_i(x)\hspace{1pt} \grad \Psi_j(x) - \Psi_j(x)\hspace{1pt} \grad \Psi_i(x)$, for $i,j = 1, \ldots ,p$ with $i<j$, except of the last row which contains the position vector  $(x_1, \ldots , x_m)$.
Observing that $\langle \gamma(x),  \grad \Psi_i(x) \rangle = d \cdot \Psi_i(x)$ for any $i$
we have:
\[ \langle \gamma(x),  \omega_{i,j}(x) \rangle = d [\Psi_i(x)\Psi_j(x) - \Psi_j(x)\Psi_i(x)] = 0,
 \]
which means that the Euler vector field $\gamma(x)$ is tangent to the fibres of $\frac{\Psi}{\|\Psi\|}$.
We also have: $\langle \gamma(x),  x \rangle = \sum_i q_i x_i^2 > 0$,
for $x \not= 0$, which means that $x\notin M(\frac{\Psi}{\|\Psi\|})$ and therefore contradicts our assumption. 
\end{proof}

\medskip

\subsection{Mixed polynomials}\label{ss:semitame}\

Let $f:\mathbb{C}^{n}\rightarrow\mathbb{C}$ be a non-constant mixed polynomial. This is by definition a polynomial in variables $z$ and $\bar z$ (cf \cite{oka1}, \cite{oka2}), namely:
  \[
   f(\mathbf{z},\overline{\mathbf{z}})=\sum_{\nu,\mu}c_{v,\mu}\mathbf{z}^{\nu}\mathbf{\overline{z}}^{\mu}\label{eq:mixed}
   \]
   where $c_{v,\mu}\neq0$, $\mathbf{z}^{\nu}:=z_{1}^{v_{1}}\cdots z_{n}^{v_{n}}$ and $\mathbf{\overline{z}}^{\mu}:=\overline{z}_{1}^{\mu_{1}}\cdots \overline{z}_{n}^{\mu_{n}}$ for n-tuples $v=(v_{1},\ldots,v_{n})$, $\mu=(\mu_{1},\ldots,\mu_{n})\in\mathbb{N}^{n}$. In  particular a complex polynomial is a mixed polynomial in which the conjugates $\bar z$ do not occur.
    \\
   For a mixed polynomial $f$, we shall use the derivation with respect to $\mathbf{z}$ and $\overline{\mathbf{z}}$ as follows:
   \[
  \mathrm{d}f:=\left(\frac{\partial f}{\partial z_{1}},\cdots,\frac{\partial f}{\partial z_{n}}\right),\overline{\mathrm{d}}f:=\left(\frac{\partial f}{\partial\overline{z}_{1}},\cdots,\frac{\partial f}{\partial\overline{z}_{n}}\right)
   \]
  The Milnor set of a mixed polynomial $f$ appears to be the following (cf \cite{CT}):
  \[ M(f)=\left\{ \mathbf{z}\in\mathbb{C}^{n}\mid\exists\lambda\in\mathbb{R} \mbox{ and } \mu\in\mathbb{C}^{*}, \mbox{ such that }\lambda\mathbf{z}=\mu\overline{\mathrm{d}f}(\mathbf{z},\overline{\mathbf{z}})+\overline{\mu}\overline{\mathrm{d}}f(\mathbf{z},\overline{\mathbf{z}})\right\} .\]
  In particular, for $\lambda=0$ we get the singular locus of $f$.

\begin{thm}\label{t:semitame}
 Let $f$ be a mixed polynomial with $S(f) \subset \{ 0\}$ and such that

\noindent
{\rm (*)} the image $f(\Sing (f/| f |))$ is bounded and 0 is isolated point in $f(\Sing (f/| f |))$.

Then $M(f/| f |)$ is bounded.
 
\end{thm}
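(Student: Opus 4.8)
The plan is to argue by contradiction with the curve selection lemma at infinity, after first recording an explicit description of $M(f/\|f\|)$. Writing $\rho = \|\cdot\|$ for the Euclidean distance, so that $\grad\rho(z)$ is proportional to the position vector $z$, a point $z\notin V$ lies in $M(f/\|f\|)$ exactly when $z$ is $\bR$-proportional to $\grad\arg f(z)$ (including the degenerate case $\grad\arg f(z)=0$). A direct computation of $\grad\arg f = \grad\,\mathrm{Im}\log f$ gives $|f|^2\,\grad\arg f = i f\,\overline{\mathrm{d}f}-i\overline{f}\,\overline{\mathrm{d}}f$, whence
\[
 M(f/\|f\|)=\Big\{\, z\in\bC^n\m V \ \Big|\ \exists\,\lambda\in\bR,\ \lambda z = i f(z)\,\overline{\mathrm{d}f}(z)-i\overline{f(z)}\,\overline{\mathrm{d}}f(z)\,\Big\}.
\]
This is precisely the part of the Milnor set $M(f)$ on which one may take $\mu=if$ in the description recalled before Theorem \ref{t:semitame}; in particular $M(f/\|f\|)\subset M(f)\m V$, and the singular points of $f/\|f\|$ are exactly those at which the relation holds with $\lambda=0$.

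Now suppose for contradiction that $M(f/\|f\|)$ is unbounded. Since it is semianalytic, the curve selection lemma at infinity produces a real-analytic arc $z(s)$, $s\in(0,\varepsilon)$, with $z(s)\in M(f/\|f\|)$ and $\|z(s)\|\to\infty$ as $s\to0^+$; taking the inner product of the defining relation with $z(s)$ lets us choose the accompanying $\lambda(s)$ real-analytic. As $z(s)\in M(f)$ and $f(z(s))$ is a Puiseux series in $s$, the limit $t_0=\lim_{s\to0^+}f(z(s))$ exists in $\bC\cup\{\infty\}$; and if $t_0$ is finite then the points $z(s_j)$ (for any $s_j\to0$) exhibit $t_0$ as an element of $S(f)\subset\{0\}$. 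Hence along the arc either $f(z(s))\to0$ or $|f(z(s))|\to\infty$.

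I would then split according to whether $\lambda\equiv0$. If $\lambda(s)\equiv0$, the arc lies in $\Sing(f/\|f\|)$, so every value $f(z(s))$ belongs to $f(\Sing(f/\|f\|))$, which by hypothesis {\rm(*)} is bounded; thus $t_0$ is finite, hence $t_0=0$. But $z(s)\notin V$ forces $f(z(s))\neq0$, so $\{f(z(s))\}$ is a set of nonzero points of $f(\Sing(f/\|f\|))$ accumulating at $0$, contradicting that $0$ is isolated in $f(\Sing(f/\|f\|))$. It is exactly here that {\rm(*)} enters; note that for holomorphic $f$ one has $\Sing(f/\|f\|)=\Crit(f)\m V$, so $f(\Sing(f/\|f\|))$ is the finite set of critical values and {\rm(*)} is automatic, which is why it did not appear in Theorem \ref{t:proto}.

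The remaining case $\lambda(s)\not\equiv0$ is, I expect, the main obstacle. Here I would substitute the leading Puiseux term $z(s)=s^{-q}(a+o(1))$, with $a\neq0$ and $q\in\bQ_{>0}$, into the defining relation and match lowest orders in $s$. This shows that the asymptotic direction $a$ must satisfy the \emph{same} relation for the dominant Newton-face part $f_\Gamma$ of $f$, i.e. $a$ is a nonzero point of $M(f_\Gamma/\|f_\Gamma\|)$; the content of $S(f)\subset\{0\}$ is precisely a nondegeneracy on the faces of the Newton polyhedron at infinity that excludes this. The delicate point is that the escape $|f|\to\infty$ cannot be ruled out by $S(f)\subset\{0\}$ alone, since that condition only constrains \emph{finite} asymptotic values; one must use the exact coefficient $\mu=if$ in the relation, as the example $f=z_1z_2$ shows, for which $M(f)\m V$ is unbounded with $|f|\to\infty$ while $M(f/\|f\|)=\varnothing$. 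Carrying out this face-by-face comparison to exclude both $f\to0$ and $|f|\to\infty$ closes the contradiction and yields that $M(f/\|f\|)$ is bounded.
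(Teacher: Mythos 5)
Your setup agrees with the paper's proof up to the decisive point: the description of $M(f/|f|)$ by the relation $\lambda z=\pm\,i\bigl(\bar f\,\bar\d f-f\,\overline{\d f}\bigr)$ with $\lambda\in\bR$, the use of the curve selection lemma at infinity, the exclusion of finite nonzero asymptotic values via $S(f)\subset\{0\}$ (so that $f\to 0$ or $|f|\to\infty$ along the curve), and the case $\lambda\equiv 0$, which you settle exactly as the paper does, by feeding the curve's values into hypothesis {\rm(*)}. All of this is correct and is precisely the paper's frame.

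The case you yourself call ``the main obstacle'', $\lambda\not\equiv 0$, is however the heart of the theorem, and there your proposal has a genuine gap. Your plan is to match leading Puiseux terms, conclude that the asymptotic direction $a$ lies in $M(f_\Gamma/|f_\Gamma|)$ for a dominant face $f_\Gamma$, and exclude this because ``the content of $S(f)\subset\{0\}$ is precisely a nondegeneracy on the faces of the Newton polyhedron at infinity''. That premise is false: semi-tameness is an asymptotic condition, and Newton (strong) non-degeneracy is only a \emph{sufficient} criterion for it and for the boundedness of $M(f/|f|)$ --- this is exactly the point of \S\ref{ss:newton} and of \cite[Theorems 3.5, 3.6]{Ch} --- so a semi-tame mixed polynomial need not be non-degenerate on its faces, and no face-wise information can be extracted from $S(f)\subset\{0\}$ (moreover $a$ may have vanishing coordinates, so even identifying the governing face is delicate). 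The paper closes this case by a different, Milnor--N\'emethi--Zaharia type computation along the curve itself: with $p(t)=at^\alpha+\cdots$, $f(p(t))=bt^\beta+\cdots$, and $\nabla:=i(\bar f\,\bar\d f-f\,\overline{\d f})$, $\nabla(p(t))=ct^\gamma+\cdots$, semi-tameness is used only to get $\beta\neq 0$; then the identity
\[
\re\Bigl(i\bar f\,\tfrac{\d f}{\d t}\Bigr)=\re\Bigl\langle \tfrac{\d p}{\d t},\,\nabla(p(t))\Bigr\rangle ,
\]
combined with the Milnor-set relation $\nabla(p(t))=\lambda(t)p(t)$, $\lambda(t)\in\bR$, forces the leading coefficients of the two expansions to be compatible, which is impossible: the left side's leading coefficient $i|b|^2\beta$ is purely imaginary, while the right side, being essentially $\lambda(t)\cdot\frac12\frac{\d\|p\|^2}{\d t}$, has non-vanishing real leading coefficient $\alpha\lambda_0\|a\|^2$. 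This single contradiction handles both $f\to 0$ and $|f|\to\infty$, and it is exactly where the precise coefficient $\mu=if$ enters (the subtlety you correctly anticipated with the $z_1z_2$ example), but through this real/imaginary incompatibility rather than through Newton polyhedra. Without this (or an equivalent) computation, your argument does not prove the theorem.
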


\begin{rem}\label{r:**}
 Since $\Sing (f/| f |)\subset \Sing f$,   condition (*) is implied in particular by the condition: \\
 (**) {\it the set of critical values $f(\Sing f)$ is bounded and  0 is an isolated critical value.}

Note that (**) is automatically true for holomorphic $f$ and even for meromorphic functions $h/g$ on the complement of the the zero locus $hg =0$ since $h/g$ is  holomorphic on this complement. 
\end{rem}

\begin{proof}
 Suppose $S(f) \subset \{ 0\}$. By using the curve selection lemma at infinity for the semi-algebraic set $M(f/| f |)$, we consider a path $p(t)\in M(f/| f |)$ tending to infinity as $t\to 0$. Under the assumptions of the theorem, we want to reach a contradiction. 
 Start by writing the expansions like Milnor did in the local setting \cite{Mi} (see \cite{NZ} for complex polynomials). Instead of the \emph{gradient} of a holomorphic function; this will be the object called $\nabla$ below:

\[ \begin{array}{lc}
 p(t) = a t^\alpha + \cdots, &  \mbox{ where } \alpha <0, \ a\not= 0,\\
 f(p(t)) = b t^\beta + \cdots, &  \mbox{ where } \  b\not= 0,\\
\nabla(p(t)) := i ( \bar f \bar \d f - f \overline{\d f})(p(t)) = c t^\gamma + \cdots
\end{array}
\]
We moreover have $\beta \not= 0$, since if $\beta = 0$ then $b\in S(f)$ and, since $b\not= 0$, we get a contradiction with our assumption $S(f) \subset \{ 0\}$.

The condition $p(t)\in M(f/| f |)$ is expressed by the following equation (see \cite{Ch} and \cite[Def. 5.2.2]{Ch-thesis}):
\begin{equation}\label{eq:M}
 \lambda(t) p(t) = \nabla(p(t)), \ \ \mbox{ with } \  \lambda(t) \in \bR.
\end{equation}

We have: $\nabla(p(t)) \equiv 0$ if and only if, either (a). $p(t) \in \Sing(f/| f |)$ and $f(p(t)) \not\equiv 0$, or (b). $f(p(t)) \equiv 0$.

The case (a) is excluded by the hypotheses ``semi-tame'' and (*) of the theorem.
The case (b) has no impact on the set $M (f/| f |)$ since this is by definition disjoint from $f=0$. Actually the case (b) occurs if and only if the zero locus $V$ of $f$ contains non-compact components of $\Sing f$, which is the case in many examples.

We may therefore assume from now on that $c\not= 0$. One has to express $\frac{\d f (p(t))}{ \d t}$ and  like done in \cite{Ch-thesis} and \cite{Ch},  take the real parts $\re(\cdot)$ and find that:

\[
 \re ( i\bar f \frac{\d f}{ \d t}) = \re \langle \frac{\d p}{ \d t}, \nabla(p(t))\rangle
\]

Using the expansions and relation \eqref{eq:M}, we get:
\begin{equation}\label{eq:expansion}
 \re ( i | b|^2 \beta t^{2\beta -1} + \cdots ) = \re ( ( \alpha \frac{c}{a}| a|^2 + i v) t^{\alpha + \gamma -1}+ \cdots )
\end{equation}

where $v$ is some real constant. 
Since $\beta \not= 0$, both leading coefficients of \eqref{eq:expansion} are non-zero. However, left side is purely imaginary and right side has a non-zero real part, which is a  contradiction. Our proof is finished.
\end{proof}
\begin{rem}
  We have seen that the assumption (*) is the major difference to the 
holomorphic or meromorphic settings. It  is indeed possible that for mixed functions we do not have fibration exactly because of this condition. This is the case in the example 
due to Oka and computed in \cite[Example 5.3.3]{Ch-thesis}, where we have $S(f) = \emptyset$ but no fibration of $f/| f |$ due actually to the failure of the condition (**) of Remark \ref{r:**}.

\end{rem}

\begin{exam}
$f(x,y) = x(1+ \bar x y)$ is a mixed version of Broughton's example.
We easily compute that  $\Sing f = \emptyset$, $V$ has two components of real codimension $2$,  $M(f) \cap V$ is bounded and $S(f) = \{ 0\}$.
This verifies the hypotheses of Theorem \ref{t:semitame} and Corollary \ref{t:nonsing}, hence $f$ yields an open book structure at infinity with smooth binding. 
 
\end{exam}

\subsection{Mixed polynomials of type $f = g\bar h$}\ 

Whenever $f = g\bar h$, for some complex holomorphic functions $g$ and $h$, it turns out that the ``semi-tame'' condition imposed to the rational map $g/h$ is sufficient to insure the fibration of $g\bar h/ | g\bar h |$, which fact was observed by Bodin and Pichon \cite{BP}. We show how this fact follows from our Theorem \ref{t:semitame}. 
\begin{cor} \label{c:bp} 
 Let $S(g/h) \subset \{ 0\}$. Then $M(g\bar h/ | g\bar h |)$ is bounded.
\end{cor}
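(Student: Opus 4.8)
The plan is to apply Theorem \ref{t:semitame} to the mixed polynomial $f := g\bar h$, so that the task reduces to deducing its two hypotheses, $S(g\bar h)\subset\{0\}$ and condition (*), from the single assumption $S(g/h)\subset\{0\}$. First I would record the elementary derivative formulas coming from the holomorphy of $g$ and $h$: $\d f = \bar h\,\d g$, while the antiholomorphic derivative is $\overline{\d}f = g\,\overline{\d h}$, where $\overline{\d h}$ denotes the complex conjugate of $\d h$. The decisive observation is that on $\{gh\neq 0\}$ one has $\arg f = \arg g - \arg h = \arg(g/h)$, so the normalized maps coincide, $f/|f| = (g/h)/|g/h|$, together with the factorization $f = |h|^{2}\,(g/h)$. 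In particular $\Sing(f/|f|) = \Sing\bigl((g/h)/|g/h|\bigr)$ and $M(f/|f|) = M\bigl((g/h)/|g/h|\bigr)$.

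Next I would dispatch condition (*). Since $g/h$ is holomorphic on $\{h\neq 0\}$, a direct computation gives $d\arg(g/h) = \im\bigl(\d(g/h)/(g/h)\bigr)$, hence $\Sing(f/|f|)$ equals the critical locus $\Crit(g/h)\cap\{gh\neq 0\}$ of the holomorphic function $g/h$. On each connected component of this locus the full differential of $g/h$ vanishes, so $g/h$ is locally constant there and takes only finitely many values. If such a component were unbounded it would supply a sequence in $M(g/h)$ tending to infinity along which $g/h$ tends to its constant value $c$; by $S(g/h)\subset\{0\}$ this forces $c=0$, i.e. $g\equiv 0$ on that component, contradicting $g\neq 0$. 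Hence $\Sing(f/|f|)$ is bounded, its image under the continuous map $f=g\bar h$ is bounded and contained in $\bC^{*}$, so (*) holds. In fact, with $\Sing(f/|f|)$ bounded, case (a) in the proof of Theorem \ref{t:semitame} becomes vacuous at infinity.

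The heart of the matter, and the step I expect to be the main obstacle, is the implication $S(g/h)\subset\{0\}\Rightarrow S(g\bar h)\subset\{0\}$. The difficulty is that the \emph{value}-Milnor sets do not coincide: writing the mixed Milnor set as $M(g\bar h)=\{z : \lambda z = \mu\,h\,\overline{\d g}+\bar\mu\,g\,\overline{\d h},\ \lambda\in\bR,\ \mu\in\bC^{*}\}$ and the Milnor set of the meromorphic function as $M(g/h)=\{z: z \parallel_{\bC} (\bar h\,\overline{\d g}-\bar g\,\overline{\d h})\}$, these are genuinely different subsets, so one cannot simply transport an asymptotic value from one to the other. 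I would attack this by the curve selection lemma at infinity: suppose a real-analytic arc $p(t)\to\infty$ in $M(g\bar h)$ realizes a limit $g\bar h(p(t))\to t_{0}\neq 0$. Then $g,h\neq 0$ along $p(t)$, and the factorization gives $g/h(p(t)) = t_{0}/|h(p(t))|^{2}\,(1+o(1))$. The goal is to convert the mixed Milnor equation along $p(t)$ into a contradiction with $S(g/h)\subset\{0\}$; the delicate case is when $|h(p(t))|\to\infty$ (so $g\to 0$, with $p(t)$ approaching $\{g=0\}$ at infinity) and $g/h\to 0$, which is a priori compatible with the hypothesis.

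Resolving this last case is where the real work lies. It requires the leading-order Puiseux analysis of the mixed Milnor equation along $p(t)$ — the same computation that underlies the proof of Theorem \ref{t:semitame} and of \cite{NZ} — in order to show that a non-transverse arc keeping $g\bar h$ bounded away from $0$ cannot exist, so that every finite asymptotic value of $g\bar h$ along $M(g\bar h)$ must be $0$. Once $S(g\bar h)\subset\{0\}$ and (*) are both established, Theorem \ref{t:semitame} applies to $f=g\bar h$ and yields that $M(g\bar h/|g\bar h|)$ is bounded, which is exactly the assertion, thereby recovering the Bodin–Pichon fibration result as a consequence of our criterion.
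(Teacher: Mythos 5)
There is a genuine gap, and it sits exactly where you flagged it: your plan requires the implication $S(g/h)\subset\{0\}\Rightarrow S(g\bar h)\subset\{0\}$ in order to apply Theorem \ref{t:semitame} verbatim to $f=g\bar h$, and you do not prove it — you only describe the Puiseux analysis that ``the real work'' would require. As you yourself observe, the Milnor sets $M(g\bar h)$ and $M(g/h)$ are genuinely different subsets, and the values $g\bar h=|h|^2(g/h)$ and $g/h$ differ by an unbounded factor along arcs approaching $\{h=0\}$ at infinity, so there is no direct transport of asymptotic values from one to the other. An argument that stops at ``this case is a priori compatible with the hypothesis'' is not a proof; the central hypothesis of the theorem you invoke remains unverified.

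The irony is that your first paragraph already contains the key that makes this detour unnecessary: the identity $g\bar h/|g\bar h| = \frac{g/h}{|g/h|}$ on the complement of $V=\{gh=0\}$, hence $M(g\bar h/|g\bar h|)=M\bigl(\frac{g/h}{|g/h|}\bigr)$ there (and both sets live in that complement by definition). The paper's proof exploits precisely this: instead of applying the \emph{statement} of Theorem \ref{t:semitame} to the mixed polynomial $g\bar h$, it applies the \emph{proof} of that theorem to the meromorphic function $g/h$, whose semi-tameness $S(g/h)\subset\{0\}$ is the given hypothesis — no transfer of asymptotic values is needed. A path $p(t)\to\infty$ in $M\bigl(\frac{g/h}{|g/h|}\bigr)\subset M(g/h)\m V$ with $(g/h)(p(t))\to b\neq 0$ contradicts $S(g/h)\subset\{0\}$ directly, and conditions (*) and (**) hold for $g/h$ because it is holomorphic off $V$, so $\Sing(g/h)\m V$ lies in finitely many fibres (essentially the argument you give in your second paragraph, which is sound). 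This bounds $M\bigl(\frac{g/h}{|g/h|}\bigr)$, and the identity of the phase maps then bounds $M(g\bar h/|g\bar h|)$. So your condition-(*) analysis is recoverable, but the backbone of your proof must be redirected from $g\bar h$ to $g/h$; as written, the proposal does not establish the corollary.
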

\begin{proof}
  On the complement of the hypersurface $V := \{gh =0\}$ we have the equality
 $g\bar h/ | g\bar h |  = \frac{g/h}{|g/h|}$
which  implies: 
$ \Sing (g\bar h/ | g\bar h | )\m V = \Sing (\frac{g/h}{|g/h|})\m V$
and $M(g\bar h/ | g\bar h |) \m V =M(\frac{g/h}{|g/h|}) \m V $.
The proof of our Theorem \ref{t:semitame} applies to meromorphic functions too. The  conditions (*) and  (**) are fulfilled since  $g/h$ is holomorphic on the complement of $V$ and so $\Sing (g/h) \setminus V$ is contained in finitely many fibres and we have the equality $\Sing (g/h)\m V= \Sing(\frac{g/h}{|g/h|})\m V$. Therefore our theorem shows that $M(\frac{g/h}{|g/h|}) $ is bounded. It then follows from the above equality that $M(g\bar h/ | g\bar h |)$ is also bounded.
\end{proof}
To finish the proof that the map $g\bar h$ actually induces a Milnor fibration at infinity \eqref{eq:milnormap} one follows the proof by Milnor in the local setting \cite[Lemmas 4.6, 4.7, Theorem 4.8]{Mi} and combine with the proof of our Theorem 
\ref{t:semitame} above. Let us also notice that the map $g\bar h$ does not necessarily have the Thom regularity property 
at $V$ (see Example \ref{ex:thom} and \cite{ACT}) thus the property (i) of the Introduction may not be fulfilled.

\medskip

In the remainder of the section we single out classes of mixed functions where both conditions (i) and (ii) from the Introduction are satisfied and therefore yield open book structures with singular binding at infinity by Theorem \ref{t:sing}. Let us begin with an example.

\begin{exam}\label{ex:bounded} 
Let $f:\mathbb{C}^{2}\rightarrow\mathbb{C}$, $f(x,y)=(2+x)x^{2}y\overline{x}$.
We have $\Sing f=\{(x,y)\in\mathbb{C}^{2}\mid x=0\}\bigcup\{(-2,0)\} \subset V$.
By computations we get that $\overline{M(f)\setminus V}\cap V$ is bounded.
Theorem \ref{t:semitame} tells that  $M(\frac{f}{\left|f\right|})$ is bounded (and by direct computations  we see that it is actually empty).
It follows that $f/\| f\|$ induces an open book decomposition at infinity with singular binding.
\end{exam}

\subsection{Mixed polynomials of polar weighted homogeneous type}\ 
  
\begin{defn}\label{d:polar}
A mixed polynomial $f:\mathbb{C}^{n}\rightarrow\mathbb{C}$ is called
\emph{polar weighted homogeneous} if there exist $n$ integers $p_{1},\ldots,p_{n}$ with $\gcd(p_{1},\ldots,p_{n})=1$
and a positive integer $m_{p}$ such that $\sum_{j=1}^{n}p_{j}(v_{j}-\mu_{j})= k$
for every n-tuples $\nu$ and $\mu$. We call $(p_{1},\ldots,p_{n})$ the polar weight of $f$
and $k$ the polar degree of $f$.
\end{defn}

 \begin{thm}\label{t:homogen}
\sloppy Let $f:\mathbb{C}^{n}\rightarrow\mathbb{C}$ be a non-constant mixed polynomial which is polar weighted-homogeneous, $n\ge 2$, such that $\codim_\bR V = 2$ at infinity.
 Then $(K_R, \frac{f}{\|f \|})$ is an open book structure with singular binding on $S^{2n-1}_R$,
   independent (up to isotopies) of $R \gg 0$. 
\end{thm}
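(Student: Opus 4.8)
The plan is to verify the two hypotheses of Theorem~\ref{t:sing}, namely condition (i), that $\overline{M(f)\setminus V}\cap V$ is bounded, and condition (ii), that $M(\frac{f}{\|f\|})$ is bounded, since the codimension hypothesis is already assumed; the conclusion then follows immediately from Theorem~\ref{t:sing}. The key observation driving both verifications is that a polar weighted homogeneous mixed polynomial carries \emph{two} commuting $\bR_+$/$S^1$-actions: besides the polar $S^1$-action coming from Definition~\ref{d:polar}, one can exploit the fact that the polar weight structure also provides a radial-type scaling. First I would set up the relevant Euler-type vector fields. For the polar weighted action $\rho\cdot \mathbf{z} = (\rho^{p_1}z_1,\ldots,\rho^{p_n}z_n)$ with $\rho\in S^1\subset\bC^*$, differentiation gives a vector field $\eta(\mathbf{z})$ satisfying $\langle\eta(\mathbf{z}), \grad f_i\rangle$-type relations analogous to those in the proof of Proposition~\ref{p:radial}, forcing $\eta$ to be tangent to the fibres of $\frac{f}{\|f\|}$.

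For condition (ii) I would argue exactly along the lines of Proposition~\ref{p:radial}: the polar action produces a vector field $\eta(\mathbf{z})$ tangent to the fibres of $\frac{f}{\|f\|}$ and simultaneously, because the action is by \emph{unitary} scaling (each coordinate multiplied by $\rho^{p_j}$ with $|\rho|=1$), the field $\eta$ is tangent to every sphere $S^{2n-1}_R$, i.e. $\langle\eta(\mathbf{z}),\mathbf{z}\rangle$ has the right vanishing/positivity behaviour off the origin. Combining a tangent-to-fibres field with control of its radial component shows that no point $\mathbf{z}\notin V$ can lie in $M(\frac{f}{\|f\|})$ unless it is forced to, so that $M(\frac{f}{\|f\|})$ is in fact empty (hence trivially bounded); alternatively one checks directly that the polar degree relation $\sum_j p_j(\nu_j-\mu_j)=k$ yields $\langle\eta,\omega_{1,2}\rangle = 0$ as in Proposition~\ref{p:radial}, giving the contradiction. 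For condition (i), I would combine the polar homogeneity with a radial scaling: since $M(f)$ is invariant under the scaling induced by the weights, if $\overline{M(f)\setminus V}\cap V$ contained an unbounded branch it would, by the homogeneity, be forced to meet $V$ in an unbounded set, and a curve-selection argument (as in Lemma~\ref{l:bounded}) rules this out once $\Sing f\subset V$ is controlled; here the homogeneity makes $M(f)$ a cone-like set whose closure meets $V$ only over a bounded region.

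The main obstacle I anticipate is condition (i): unlike the radial weighted homogeneous case, a polar weighted homogeneous mixed polynomial need \emph{not} have $\Sing f\subset V$, and more importantly the polar action alone does not scale the radius, so one cannot immediately homogenize $M(f)$ into a cone and read off boundedness. The delicate point is that the two actions (polar $S^1$ and any radial $\bR_+$) must be reconciled: I would likely need to introduce an auxiliary radial weighted structure compatible with the polar weights, or invoke the Thom-type regularity/curve-selection machinery of Lemma~\ref{l:bounded} together with the precise form of $M(f)$ for mixed polynomials recorded in \S\ref{ss:semitame}, namely $\lambda\mathbf{z}=\mu\overline{\d f}+\bar\mu\bar\d f$. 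Feeding the polar homogeneity into this equation and tracking the homogeneity degrees of each side along a hypothetical unbounded curve $p(t)\to\infty$ with $p(t)\to V$ should produce a contradiction with the weight balance, analogous to the leading-coefficient argument in the proof of Theorem~\ref{t:semitame}. The crux is therefore showing that the $\rho$-nonregular locus of $f$, away from $V$, cannot asymptotically approach $V$, which is where the interplay between the polar weight $k$ and the Euclidean distance function is genuinely used.
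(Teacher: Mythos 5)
Your overall strategy --- verify conditions (i) and (ii) and invoke Theorem~\ref{t:sing} --- is indeed the paper's strategy, but both of your verifications are defective, and for condition (i) the defect is fatal.

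For condition (ii), your mechanism is stated backwards. The generator of the polar action, $\eta(\mathbf{z})=(ip_1z_1,\ldots,ip_nz_n)$, is tangent to every sphere, since $\langle\eta(\mathbf{z}),\mathbf{z}\rangle=\re\bigl(i\sum_j p_j|z_j|^2\bigr)=0$; but it is \emph{not} tangent to the fibres of $f/|f|$. Indeed, from $f(e^{i\theta}\cdot\mathbf{z})=e^{ik\theta}f(\mathbf{z})$ one gets $\d f(\eta)=ikf$, hence, writing $f=\Psi_1+i\Psi_2$ and $\omega_{1,2}=\Psi_1\grad\Psi_2-\Psi_2\grad\Psi_1$ as in Proposition~\ref{p:radial}, one computes $\langle\eta,\omega_{1,2}\rangle=k(\Psi_1^2+\Psi_2^2)=k|f|^2\neq 0$ off $V$ --- not $0$ as you claim. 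A vector field tangent to both the spheres and the fibres would give no information at all (the zero field is such a field); the contradiction in Proposition~\ref{p:radial} comes precisely from tangency to one family plus transversality to the other. The correct argument, dual to Proposition~\ref{p:radial}, is: if $\mathbf{z}\in M(f/|f|)$ then $\omega_{1,2}(\mathbf{z})=\lambda\mathbf{z}$ for some $\lambda\in\bR$ (possibly $0$), and pairing with $\eta$ gives $0\neq k|f(\mathbf{z})|^2=\lambda\langle\eta,\mathbf{z}\rangle=0$, a contradiction; hence $M(f/|f|)=\emptyset$. So your conclusion for (ii) is right, and a corrected vector-field argument would even be a legitimate alternative to the paper (which proves (ii) by a Sard-type argument instead), but your proof as written does not stand.

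For condition (i) you have no proof, and your last paragraph essentially concedes this. Your first suggestion assumes $M(f)$ is invariant under a radial scaling, but a polar weighted homogeneous mixed polynomial carries only the $S^1$-action; there is no $\bR_+$-action unless $f$ is also radially weighted homogeneous, which is not assumed. Your fallback, Lemma~\ref{l:bounded}, requires $\Sing(f)\cap V$ bounded, which fails for exactly the examples the theorem targets (e.g.\ $f=|z_1|^2z_2$, where $\Sing f=\{z_1=0\}\subset V$ is unbounded). And ``tracking homogeneity degrees along a curve $p(t)\to\infty$'' has nothing to bite on: polar homogeneity only relates points on the same $S^1$-orbit, all of which have the same norm, so it imposes no constraint transverse to the spheres. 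The paper's actual proof of (i) is of an entirely different nature: supposing $Z\cap V$ unbounded (where $Z=\overline{M(f)\setminus V}$), semi-algebraic triangulation plus the Curve Selection Lemma produce, for each large $R$, critical values of $f_{|S^{2n-1}_R}$ of arbitrarily small nonzero modulus; the polar $S^1$-action then sweeps each such critical value into an entire circle of critical values, so the critical value set of $f_{|S^{2n-1}_R}$ contains a punctured disk around $0$, contradicting the density of regular values. The hypothesis $\codim_\bR V=2$ at infinity enters exactly here, to guarantee that $\im(f_{|S^{2n-1}_R})$ contains a disk around $0$. This Sard-type sweeping argument is the real content of the theorem and is absent from your proposal.
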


\begin{proof}
 We apply Theorem \ref{t:sing} and therefore we have to check here that its hypotheses are fulfilled, which means conditions (i) and (ii) from the Introduction.
By the codimension condition at infinity, for any large enough radius $R$,  the image of the restriction $f_{|S^{2n-1}_R}$ contains the germ at $0$ of a semi-analytic set of positive dimension.
  The $S^1$-action provided by the polar weighted homogeneous polynomial $f$ yields that if $a\in \im(f_{|S^{2n-1}_R})$ then $\lambda^k a\in \im(f_{|S^{2n-1}_R})$ for any $\lambda \in S^1$. It then follows that $\im(f_{|S^{2n-1}_R})$ contains some disk centered at the origin, for any $R\gg 1$ (where the radius of the disk depends of $R$).
 
Denote by $Z$ the closure of the real algebraic set $M(f)\m V$. Then $Z$ and $Z \cap V$ are closed semi-algebraic sets.
In these notations, condition (i)  has  the following obvious reformulation:\ \ 
(i'). \  $Z \cap V$ is bounded.     

If $Z \cap V$ is not bounded, then $Z \cap V$ contains an unbounded
maximal semi-algebraic  subset $A$ of positive dimension $s>0$.
 We may assume that $A$ is connected, otherwise we pick up a component. Then $Z$ contains a connected semi-algebraic subset M of dimension $\ge s+1$
such that $M\cap V = A$.  (This follows for instance from the fact that
semi-algebraic sets can be triangulated.) Then for almost any point $x \in A$, the sphere
$S^{2n-1}_R$ which passes through $x$ intersects $M$
and the germ at $x$ of this intersection, let it be denoted by $\Gamma_R$, has positive dimension.  (This also
follows from the triangulated structure of $M$). Then the image $f(\Gamma_R)$ is the germ at 0 of a semi-algebraic set of positive dimension.   
It then follows from the Curve Selection Lemma that $f(\Gamma_R)$ contains some germ at 0 of a real curve. This means that the set of critical values of the map $f_{|S^{2n-1}_R}$ contains points of any small enough radius.
For any such non-zero critical value, the $S^1$-action provided by the polar action tells that the whole circle of the corresponding radius is contained in the set of critical values of $f_{|S^{2n-1}_R}$.
 This shows that for any $R\gg 1$ there exists some small punctured disk centered at 0 (of radius depending on $R$) which is included in the set of critical values of the map $f_{|S^{2n-1}_R}$. But this contradicts the fact that the set of regular values must be dense. We have thus proved (i), namely that  $\overline{M(f)\setminus V}\cap V$  is bounded.

By a similar argument applied to the map $f/|f|$ instead of $f$, we get that $M(f/|f|)$ is bounded too. Then Theorem \ref{t:sing} applies and proves our claim.
\end{proof}

 \begin{exam}\label{ex:thom}
The mixed polynomial function $f=  | z_1 |^2 z_2$ is of type $g\bar h$ but not Thom regular along $\Sing f = \{ z_1 = 0\}\subset V$, see \cite{ACT}.
However it satisfies the hypotheses of Theorem \ref{t:homogen}. We may easily find such examples in more variables, such as: $f=  (z_1 + z_3^5) \bar z_1 z_2$.  They have the interesting property that they define open book structures at infinity  (by Theorem \ref{t:homogen}) without being Thom regular at V as holomorphic functions always are (see \S \ref{ss:complex}). 

 \end{exam}
 
\subsection{Mixed polynomials and Newton non-degeneracy at infinity}\label{ss:newton}

 The semi-tameness condition is not easy to control since it is an asymptotic condition. 
In \cite{CT},  \cite{Ch-thesis} and \cite{Ch} we have adapted the notion of Newton non-degeneracy to mixed polynomials and, more generally, to maps (real, mixed or complex) in \cite{CDTT}.  One has for instance the following result \cite[Theorem 3.5]{Ch}: \emph{if $f$ is Newton strongly non-degenerated on every face of the convex hull of its support, then $M (f/| f |)$ is bounded}. This is an extension of a result by N\'emethi-Zaharia \cite[p. 329-330]{NZ} for complex polynomials.
Moreover, \cite[Theorem 3.6]{Ch} shows that in order to prove condition (ii) it is enough to show that the critical loci of $\frac{f_{\triangle}}{|f_{\triangle}|}$ are empty on $\mathbb{C}^{*n}$ for any strictly bad face $\triangle$ of $\overline{\mathrm{supp}(f)}$, see the next example.

For  holomorphic polynomials of two variables, the condition (ii): ``$M(\frac{f}{|f|})$ is bounded'' is equivalent to the condition  $S(f)\subset \{0\}$, cf \cite{Bo}. In the mixed case, the following example shows that ``semi-tame'' is not a necessary condition for the existence of the Milnor fibration of $\frac{f}{|f|}$.
\begin{exam}\label{ex:non semitame}
Consider  $f(z_{1},z_{2})=z_{1}(1+\left|z_{2}\right|^{2}+z_{1}z_{2}^{4})$, taken from \cite{CT}.
We have $\Sing f=\emptyset$ and $B(f)=S(f)=\left\{ c\in\mathbb{C}\mid\left|c\right|=\frac{1}{4}\right\}$.
By Lemma \ref{l:bounded}, it follows that $\overline{M(f)\setminus V}\cap V$ is bounded. One can show that  $f$ is strongly non-degenerate.
Consider next the restriction $f_{\triangle}=z_{1}(\left|z_{2}\right|^{2}+z_{1}z_{2}^{4})$ of $f$ to the single ``strictly bad'' face $\triangle$ of $\overline{\mathrm{supp}(f)}$. By direct computations we find that $\Sing\frac{f_{\triangle}}{|f_{\triangle}|}\bigcap\mathbb{C}^{*2}=\emptyset$.
Then \cite[Theorem 3.6]{Ch} shows that $M(\frac{f}{|f|})$ is bounded.
Consequently, our Theorem \ref{t:sing} tells that $\frac{f}{|f|}$ induces an open book structure at infinity. 
\end{exam}


\bigskip

\end{document}